\theoremstyle{remark}
\newtheorem{definition}{Definition}
\newtheorem{proposition}{Proposition}
\newtheorem{corollary}{Corollary}
\begin{document}
\title{Fairly Extreme: Minimizing Outages Equitably}
\author{Kaarthik Sundar$^\dag$, Deepjyoti Deka$^\dag$ and Russell Bent$^\dag$
\thanks{\noindent 
$^\dag$Los Alamos National Laboratory, New Mexico, USA
}\;
\thanks{\noindent E-mail ids: \texttt{\{kaarthik,deepjyoti,rbent\}@lanl.gov}}\;
\thanks{\noindent The authors acknowledge the funding provided by LANL’s Directed Research and Development project: ``Resilient operation of interdependent engineered networks and natural systems'' and DOE's Grid Modernization Initiative project: ``Climate-Resilient Equitable Resource Planning''. The research work conducted at Los Alamos National Laboratory is done under the auspices of the National Nuclear Security Administration of the U.S. Department of Energy under Contract No. 89233218CNA000001.}\;
}

\maketitle

\begin{abstract}
This paper focuses on the problem of minimizing the outages due to extreme events on the power grid equitably among all customers of the grid. The paper presents two ways of incorporating fairness into the existing formulations that seek to minimize the total outage in the power grid. The first method is motivated by existing literature on incorporating fairness in optimization problems and this is done by modifying the problem's objective function. The second method introduces a novel notion of fairness, termed $\varepsilon$-fairness, that can be incorporated into existing problem formulations through a single second-order cone constraint. Both these methods are very general and can be used to incorporate fairness in existing planning and operational optimization problems in the power grid and beyond. Extensive computational case studies that examine the effectiveness of both these methods to characterize fairness is presented followed by conclusions and ways forward.
\end{abstract}

\begin{IEEEkeywords}
fairness; extreme events; power systems; optimization; equitable
\end{IEEEkeywords}

\section{Introduction} \label{sec:intro}
In the face of ever-increasing, climate-driven extreme events, power system operators need to manage their systems in a way as to minimize the total impact any event can have on all customers. In particular, the total impact is almost always measured as the sum total of the power outage on individual customers. Such an operating point that minimizes the sum of outages (or load shed), though cost-effective in terms of the total outage, may be ``unfair''. In other words, a majority of the burden of outages may fall on a subset of the customers. Nevertheless, it is also not difficult to realize that any attempt to be fair to all customers in terms of discomfort/outage will necessarily come at the expense of increasing the overall impact, i.e., sum
of outages, of the extreme event to the system. Thus, there inherently is a trade-off between cost and fairness that the operator must make to ensure outages are minimized equitably among all customers. This paper seeks to mathematically elucidate this trade-off by developing two concrete models of incorporating fairness into existing power system optimization problem formulations that seek to manage impacts from extreme events. 

Modeling and incorporating fairness in optimization problems is not new, and its effect on the solutions of the optimization problem has been studied in a variety of settings ranging from communication networks \cite{bertsekas2021data} and financial applications \cite{stubbs2009multi} to general combinatorial optimization problems (see \cite{bektacs2020using} and references therein). Furthermore, different metrics to measure fairness \cite{Gini1936,Jain1984} have been widely used across different domains. The importance of incorporating fairness into power system operations in an evolving power grid has come to the fore as both researchers and system operators are becoming cognizant of the social and health impacts that operations can have on the society and energy justice considerations (see \cite{Mathieu2023,taylor2023managing,huang2023inequalities,kody2022sharing,wang2023local}). This work takes a step in that direction by developing rigorous mathematics and, as a result, a simple framework to incorporate the notion of fairness into operational optimization problems that exist the power system literature. In particular, we focus on answering the following two questions in a power system optimization context where the objective function of most, if not all, of the optimization problems focus on decreasing the total cost of operations in one way or another: 
\begin{enumerate}[label=(Q\arabic*)]
 \item Are there improved objective functions that can be used to explore the trade-off between fairness and cost of operations? 
 \item Can we formulate fairness as a convex constraint, include it to existing power system optimization problems and explore the aforementioned trade-off in a systematic manner?
\end{enumerate}
We answer both these questions in the affirmative and compare and contrast both these approaches using the Minimum Load Shedding (MLS) problem \cite{coffrin2018relaxations} with a Direct Current (DC) power flow model that focuses on minimizing the sum of the load shed to each customer after an extreme event damages a subset of components of the power system. Though all our models generalize to the the full Alternating Current (AC) power flow equations, we restrict our focus on the DC model for ease of exposition. 

The reason behind posing the problem of incorporating fairness in power system optimization in two different ways, one as an objective (Q1) and another as a convex constraint (Q2), is that extensive work in operations research for (Q1) exists but has not sufficiently applied to power system research, while to the best of our knowledge, none exits for (Q2). So the dual focus of this article is to bridge the gap between existing research for (Q1) and critical problems in the power community, and to further develop a novel technique to answer (Q2), that is applicable above and beyond power systems. In the process of doing so, we shall demonstrate that the novel technique developed to answer (Q2) is more flexible and offers a better approach to incorporate fairness into existing grid optimization problems. Furthermore, we also remark that the approaches we develop are very general and can be suitably modified to work for capacity expansion problems as well. 

Research to answer (Q1) exists in the operations research literature for applications like vehicle routing, facility location, and nurse rostering (see \cite{bektacs2020using,barbati2016equality, matl2018workload,martin2013cooperative} and references therein), to name a few. In the power systems literature, equity and load prioritization has been used interchangeably; existing work in the power literature enforces equity (prioritizing disadvantageous users) through the surrogate of minimizing the weighted load shed where weights denote the importance/priority of the customers that constitute that load \cite{taylor2023managing,wang2023local}. This can, however, lead to unfair solutions when fairness is measured using metrics and also because weights can only model priorities when performing load shed rather than fairness \cite{Bertsimas2011}. More recently, authors in \cite{kody2022sharing} use min-max load shed to model fairness and a trade-off parameter to combine it with total load shed. For (Q1), we explore the use of alternate objective functions based on $\ell^p$ norms \cite{bektacs2020using} with varying $p$ values to model fairness in the MLS problem. Note that this is a generalization of the work in \cite{kody2022sharing} as the min-max cost corresponds to $p =\infty$. We also remark that a notion of fairness referred to as ``$\alpha$-fairness'' exists in the literature when the optimization problem is formulated as a utility maximization problem \cite{mo2000fair}; it is not clear how one would formulate the MLS problem in such a form and hence this approach is not pursued to tackle (Q1) in this work.

As for (Q2), we develop a novel non-trivial Second-Order Cone (SOC) constraint with a scalar parameter to model fairness, with the parameter serving to measure the minimum level of fairness that an operator warrants in the optimal solution. Prior efforts to incorporate fairness as constraints in the context of the MLS include using bounds, chosen heuristically, on the maximum load shed at each load \cite{zukerman2008fair}. This can, however, lead to the optimization problem becoming infeasibile due to improper choice of bounds. 
In contrast, by directly encoding a fairness measure as a convex constraint we seek to avoid computing trade-off weights or variable bounds to indirectly enhance fairness. For MLS, our convex formulation directly finds the optimal load shed that has a minimum level of fairness encoded using the parameter. It can be immediately extended to include weighted load shedding cost as well, as necessitated for internalizing priority consideration \cite{taylor2023managing}. Furthermore, we demonstrate, theoretically and through simulations, that by changing the scalar parameter in our proposed method, one can achieve a monotonic increase in the commonly used fairness metric in \cite{Jain1984}.

\section{Mathematical Preliminaries} \label{sec:prelim}
This section formally introduces the (i) literature's MLS problem (ii) fairness indices used for measuring fairness in the load shed across customers obtained by solving different fair variants of the MLS problem and (iii) metrics used to quantify the trade-off between fairness and cost of operations in the MLS problem. We start by presenting the mathematical formulation for the MLS problem. 

\subsection{Minimum Load Shedding (MLS) Problem} \label{subsec:mls} 
Given a damaged power network with undamaged transmission lines $\mathcal L$, buses $\mathcal B$, generators $\mathcal G$ and loads $\mathcal D$, with decision variables $\theta_b$ that denotes the phase angle at bus $b \in \mathcal B$, $p_g$ that denotes the real power generated by generator $g \in \mathcal G$ and non-negative variable $d_i$ that denotes the amount of load shed at $i \in \mathcal D$ and parameters $p_{ij}^{\max}, b_{ij} ~\forall (i, j) \in \mathcal L$ (thermal limit and susceptance of line), $p_g^{\min}, p_g^{\max} ~\forall g \in \mathcal G$ (generation limits), $d_i^{\max} ~\forall i \in \mathcal D$ (active demands), the MLS problem is formulated as:
\begin{equation*}
 \min \sum_{i \in \mathcal D} d_i \text{ s.t.: } \mathcal X_{\mathrm{DC}} \triangleq \left\{ 
 \begin{gathered}
 \sum_{g \in \mathcal G} p_g = \sum_{i \in \mathcal D} (d_i^{\max} - d_i) \\ 
 p_g \in [p_g^{\min}, p_g^{\max}], d_i \in [0, d_i^{\max}] \\ 
 -p_{ij}^{\max} \leqslant b_{ij}(\theta_i - \theta_j) \leqslant p_{ij}^{\max}
 \end{gathered}
 \right\} 
\end{equation*}

If we let $\bm d \in \mathbb{R}_{\geqslant 0}$ denote the vector of the individual load sheds $d_i$, $i \in \mathcal D$ then the MLS problem can be can also be represented concisely (with some abuse of notation) as 
\begin{gather}
 \bm v^* \triangleq \operatornamewithlimits{argmin}_{\bm d \in \mathcal X_{\mathrm{DC}}} \sum_{i \in \mathcal D} d_i = \operatornamewithlimits{argmin}_{\bm d \in \mathcal X_{\mathrm{DC}}} \| \bm d \|_1 \label{eq:mls}
\end{gather}
In \eqref{eq:mls}, we use the $\ell^1$-norm, $\| \cdot \|_1$, to denote the total load shed in the system as the individual load shed $d_i \geqslant 0$ for every $i \in \mathcal D$. 

When non-negative weights $w_i \geqslant 0$ for each $i\in \mathcal D$, that denotes the importance or priority of the load $i$, are provided, the weighted version of the MLS can be formulated as 
\begin{gather}
 \bm {v}_{\bm w}^* \triangleq \operatornamewithlimits{argmin}_{\bm d \in \mathcal X_{\mathrm{DC}}} \sum_{i \in \mathcal D} w_i \cdot d_i = \operatornamewithlimits{argmin}_{ \bm d \in \mathcal X_{\mathrm{DC}}} \| \bm w \circ \bm d \|_1 \label{eq:weighted-mls}
\end{gather}
where $\bm w$ denotes the vector of weights and $\bm x\circ \bm y$ denotes component-wise product of vectors $\bm x$ and $\bm y$.

We remark that some variant of \eqref{eq:mls} and \eqref{eq:weighted-mls} is routinely used by power system operators to restore the system to its normalcy \cite{rhodes2021powermodelsrestoration,coffrin2018relaxations} or even to perform public-service power shutoffs \cite{rhodes2020balancing}. Next, we present two important metrics that are widely accepted as measures of fairness or unfairness among different components of any vector $\bm x \in \mathbb R_{\geqslant 0}^n$

\subsection{Fairness Indices} \label{subsec:fairness-indices} 
Gini \cite{Gini1936} proposed to measure the unfairness of a given vector $\bm x \in \mathbb R_{\geqslant 0}^n$
by computing the following ``index'' (also refered to as Gini Index): 
\begin{gather}
 \text{Gini Index: } \mathrm{GI}(\bm x) \triangleq \frac{\sum_{1 \leqslant i \leqslant j \leqslant n} | x_i - x_j |}{(n-1) \cdot \| \bm x \|_1}. \label{eq:gi}
\end{gather}
$\mathrm{GI}$ takes the value $1$ when the vector is most unfair i.e., exactly only element of the vector is non-zero and it takes the value $0$ when all the elements of the vector are equal.

The next measure is by Jain et. al \cite{Jain1984} who proposed to measure the fairness of $\bm x \in \mathbb R_{\geqslant 0}^n$ by computing the following index:
\begin{gather}
 \text{Jain et. al Index: } \mathrm{JI}(\bm x) \triangleq \frac 1n \cdot \frac{\| \bm x \|_1^2}{\| \bm x \circ \bm x \|_1}. \label{eq:ji}
\end{gather}
$\mathrm{JI}$ takes the value $1/n$ when exactly only element of the vector is non-zero and $1$ when all the elements of the vector are equal. Note that the denominator in \eqref{eq:ji} is the square of the 2-norm of $\bm x$. Throughout the rest of the article, we shall use these two indices to measure how fair or unfair the load shed or the weighted load shed provided by the MLS is to all the loads in the network.

\subsection{Price of Fairness} \label{subsec:pof} 
Price of fairness (POF) is a metric that is used in the literature \cite{Bertsimas2011} to study the trade-off between cost and fairness for resource allocation problems. Here, we use it to study the cost-fairness trade-off for different models of fairness included in the MLS problem. To define POF, let $F$ denote any version of the MLS problem with some model of fairness (incorporated either using (Q1) or (Q2)) in \eqref{eq:mls} and $\bm z^*(F)$ denote the vector of load sheds that is obtained by solving $F$. Then the POF is defined as:
\begin{gather}
 \mathrm{POF}(F) \triangleq \frac{\|\bm z^*(F)\|_1 - \|\bm v^*\|_1}{\|\bm v^*\|_1}. \label{eq:pof}
\end{gather}
The POF for $F$ in \eqref{eq:pof} is the relative increase in the total load shed under the fair solution $\bm z^*(F)$, compared to the MLS
solution of \eqref{eq:mls}, $\bm v^*$. In \eqref{eq:pof}, $\| \cdot \|_1$ is used to denote the total load shed as it equals the summation of all the individual components of the load shed vector. Note that POF is $\geqslant 0$
because the $\bm z^*$ is the minimum load shed that can be achieved by any fair MLS problem. The values of $\mathrm{POF}(F)$ closer to $0$ are preferable for any $F$ because it indicates less compromise on cost of operations while enforcing fairness. The definition \eqref{eq:pof} can be extended to the weighted version of the MLS problem in \eqref{eq:weighted-mls} by using $\bm v_{\bm w}^*$ in place of $\bm v^*$. 
The scope of this work is to quantify POF when using different models of fairness with the MLS problem. 

\section{Incorporating Fairness in the Objective} \label{sec:q1}
In this section, we answer (Q1) by relying on recent literature on incorporating fairness in combinatorial optimization problems \cite{bektacs2020using}. We propose to use $\ell^p$ norms with $p > 1$ instead of the $\|\cdot \|_1$ norm in the objective function of the MLS problems in \eqref{eq:mls} to obtain various fair versions of the same. We start by presenting additional mathematical notation. 

First, let $\ell^p$ be the norm of any vector $\bm x \in \mathbb R^n$ is a function whose domain and range are $\mathbb R^n$ and $\mathbb R$, respectively, and is defined as 
\begin{gather}
 \|x\|_p \triangleq \left( \sum_{i = 1}^n |x_i|^p\right)^{\frac 1p} \label{eq:lpnorm}.
\end{gather}
When $p = \infty$, the definition of $\ell^{\infty}$ norm is 
\begin{gather}
 \|x \|_{\infty} \triangleq \max_{i} |x_i| \label{eq:linfnorm}.
\end{gather}
For ease of exposition, we shall restrict $p$ to be integer, although our presentation holds any real $p$ as well. Also, it is easy to see that for any $p \geqslant 1$, the $\ell^p$ norm is a convex function and hence minimizing the $\ell^p$ norm of any vector in its domain results in a convex optimization problem. Furthermore, since raising to the power $p \in (1, \infty)$ is a monotonic function for non-negative reals, minimizing the $\| \bm x \|_p$ is equivalent to minimizing $\| \bm x\|_p^p$ \cite{kostreva2004equitable}, but for the $\ell^{\infty}$ norm. For the $\ell^{\infty}$, the objective can directly be reformulated to a linear objective. 

Given these notations, we let $F^o_p$ denote the fair version MLS in \eqref{eq:mls}. Here the superscript ``$o$'' represents the fact that fairness is being incorporated using the objective function and subscript ``$p$'' stands for the $p$-norm that is being used in the objective function. Now, the formulations for $F^o_p$ is as follows:
\begin{gather}
 \bm z^*(F_p^o) \triangleq \operatornamewithlimits{argmin}_{\bm d \in \mathcal X_{\mathrm{DC}}} \sum_{i \in \mathcal D} d_i^p = \operatornamewithlimits{argmin}_{\bm d \in \mathcal X_{\mathrm{DC}}} \| \bm d \|_p^p \label{eq:p-norm-mls}
\end{gather}
When $p = \infty$, the formulations correspond to
\begin{gather}
 \bm z^*(F_{\infty}^o) \triangleq \operatornamewithlimits{argmin}_{\bm d \in \mathcal X_{\mathrm{DC}}} \left( \max_{i \in \mathcal D} d_i \right) = \operatornamewithlimits{argmin}_{\bm d \in \mathcal X_{\mathrm{DC}}} \| \bm d \|_{\infty} \label{eq:p-norm-mls-inf}
\end{gather}
Though it is an open question to understand why changing the objective from a $\ell^1$ norm to an $\ell^p$ norm captures fairness for $p \in (1, \infty)$, informal explanations as to why this is the case can be found in \cite{bektacs2020using,kostreva2004equitable,gupta2023lp}. As for the case $p = \infty$, it is known in economics and game-theory that this notion of fairness is a generalization of the
Rawlsian justice \cite{Rawls1971} and the Kalai-Smorodinsky solution of a two-player game for multiple players \cite{kalai1975other}, or in the MLS case, multiple loads. 

As mentioned in the beginning of this section, \eqref{eq:p-norm-mls} is a convex optimization problem and \eqref{eq:p-norm-mls-inf} can be reformulated to a linear optimization problem. The reformulation for \eqref{eq:p-norm-mls-inf} is shown below:
\begin{gather}
 \bm z^*(F_{\infty}^{o}) = \operatornamewithlimits{argmin}~ \{ y : y \geqslant d_i ~\forall i \in \mathcal D, \bm d \in \mathcal X_{\mathrm{DC}} \}. \label{eq:linear-reformulation-min-max}
\end{gather}

Later in the case studies, we examine the impact of the various $\ell^p$ norms in the objective function on capturing the cost-fairness trade-off using the fairness indices and the price of fairness, introduced in Sec. \ref{sec:prelim}.

\section{Fairness as a Second-Order Cone Constraint} \label{sec:q2}
We now focus on (Q2) by developing a novel parameterized, SOC characterization of fairness. To that end, we start by invoking a well-known inequality that relates $\|\bm x\|_1$ and $\|\bm x\|_2$ for any $\bm x \in \mathbb R^n$, derived using the fundamental result of ``equivalence of norms'' (see \cite{horn2012matrix})
\begin{gather}
 \|\bm x \|_2 \leqslant \| \bm x\|_1 \leqslant \sqrt{n} \cdot \|\bm x\|_2. \label{eq:norm-eq}
\end{gather} 
In the above inequality, it is not difficult see that 
\begin{enumerate}[label=(\roman*)]
 \item the first inequality in \eqref{eq:norm-eq} holds at equality, i.e., $\|\bm x \|_2 = \| \bm x\|_1$, when $x_i$ is zero for all but one component (\textbf{most unfair}) and
 \item the second inequality in \eqref{eq:norm-eq} holds at equality, i.e., $\| \bm x\|_1 = \sqrt{n} \cdot \|\bm x\|_2$, when every component of $\bm x$ is equal to one another (\textbf{most fair}).
\end{enumerate}
Note that any $\bm x \in \mathbb R^n$ that satisfies (i) or (ii) also have Gini Index (see \eqref{eq:gi}) value $0$ or $1$, respectively. Furthermore, these conditions directly encode the bounds on the Jain et. al Index (see \eqref{eq:ji}), i.e., when $\bm x$ satisfies (i) or (ii), the value of its Jain et. al Index is $1/n$ or $1$, respectively. We use this intuition to introduce the following parameterized definition of fairness: 

\begin{definition}
\label{def:eps-fairness} \textit{$\varepsilon$-fairness} -- 
For any $\bm x \in \mathbb R^n$ and $\varepsilon \in [0, 1]$, we say $\bm x$ is ``$\varepsilon$-fair'', if $\left(1-\varepsilon + \varepsilon\sqrt{n} \right) \cdot \|\bm x \|_2 = \|\bm x\|_1$. Here, we say $\bm x$ is most unfair (fair) when $\varepsilon = 0$ ($\varepsilon = 1$), respectively. Indeed, at that value, $\bm x$ attains the lower (upper) bound in \eqref{eq:norm-eq}. Furthermore, we define $\bm x$ to be \textbf{``at least $\varepsilon$-fair''}, if $\left(1-\varepsilon + \varepsilon\sqrt{n} \right) \cdot \|\bm x \|_2 \leqslant \|\bm x\|_1$.
\end{definition}

We remark that $\left(1-\varepsilon + \varepsilon\sqrt{n} \right)$ in definition \ref{def:eps-fairness} is the convex combination of $1$ and $\sqrt n$, with $\varepsilon$ as the multiplier. The definition also allows modification of the MLS problem in \eqref{eq:mls} to explore the set feasible load sheds that are at least $\varepsilon$-fair by addition of the following SOC constraint 
\begin{gather}
 \left(1-\varepsilon + \varepsilon\sqrt{|\mathcal D|} \right) \cdot \| \bm d \|_2 \leqslant \| \bm d \|_1 \quad \forall \bm d \in \mathcal X_{\mathrm{dc}}. \label{eq:soc-e-fair} 
\end{gather}
When $\varepsilon$ is set to $0$ the \eqref{eq:soc-e-fair} enforces the trivial bound on the total load shed in the system, i.e., $\| \bm d \|_2 \leqslant \| \bm d \|_1$. Armed with these notations and \eqref{eq:soc-e-fair}, we are now ready to formulate a fair version of the MLS as 
\begin{gather}
 \bm z^*(F_{\varepsilon}^c) \triangleq \operatornamewithlimits{argmin} \| \bm d \|_1 \text{ subject to: \eqref{eq:soc-e-fair}.} \label{eq:fair-mls} 
\end{gather}
Here, the superscript ``$c$'' in $F_{\varepsilon}^c$ is used to denote the fact that fairness is enforced in a constraint form. 
We now present some desirable properties of $F_{\varepsilon}^c$; we remark that analogous properties for $F_p^o$ in \eqref{eq:p-norm-mls} do no exist. 
\begin{proposition} \label{prop:variance}
Given $\varepsilon$ and the vector of optimal load sheds $\bm z^*(F_{\varepsilon}^c)$ in \eqref{eq:fair-mls} with sample mean and variance $\mu_{\varepsilon}$ and $\sigma_{\varepsilon}$, respectively, then we have the following inequality
\begin{gather*}
 \sigma_{\varepsilon}^2 \leqslant \left(\frac{n}{\left(1-\varepsilon + \varepsilon\sqrt{n} \right)^2} -1\right)\mu_{\varepsilon}^2
\end{gather*}
where, $n$ is the total number of loads (dimension of the vector $\bm z^*(F_{\varepsilon}^c)$).
\end{proposition}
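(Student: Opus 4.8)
The plan is to translate the two sample statistics of $\bm z^*(F_{\varepsilon}^c)$ into its $\ell^1$ and $\ell^2$ norms and then invoke the feasibility of $\bm z^*(F_{\varepsilon}^c)$ with respect to the SOC constraint \eqref{eq:soc-e-fair}. First I would set $n = |\mathcal D|$ and recall that every component of the optimal load shed is non-negative, so the sample mean satisfies $\mu_{\varepsilon} = \frac{1}{n}\sum_{i} z_i^* = \frac{1}{n}\|\bm z^*(F_{\varepsilon}^c)\|_1$. Using the population-variance convention (dividing by $n$), the sample variance obeys the standard identity $\sigma_{\varepsilon}^2 = \frac{1}{n}\sum_{i} (z_i^*)^2 - \mu_{\varepsilon}^2 = \frac{1}{n}\|\bm z^*(F_{\varepsilon}^c)\|_2^2 - \mu_{\varepsilon}^2$. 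This bookkeeping step puts both statistics on the same footing as the norms appearing in the fairness constraint.

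Next I would use the fact that $\bm z^*(F_{\varepsilon}^c)$ is a feasible point of \eqref{eq:fair-mls} and hence is at least $\varepsilon$-fair, so it satisfies $(1-\varepsilon+\varepsilon\sqrt{n})\,\|\bm z^*(F_{\varepsilon}^c)\|_2 \leqslant \|\bm z^*(F_{\varepsilon}^c)\|_1$. Writing $c \triangleq 1-\varepsilon+\varepsilon\sqrt{n}$, which is $\geqslant 1$ since $\varepsilon \in [0,1]$ and $\sqrt n \geqslant 1$, and squaring both non-negative sides yields the key bound $\|\bm z^*(F_{\varepsilon}^c)\|_2^2 \leqslant \|\bm z^*(F_{\varepsilon}^c)\|_1^2 / c^2$.

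Finally I would substitute this bound into the variance identity and eliminate the $\ell^1$ norm in favour of the mean via $\|\bm z^*(F_{\varepsilon}^c)\|_1 = n\mu_{\varepsilon}$. This gives $\sigma_{\varepsilon}^2 \leqslant \frac{1}{n}\cdot\frac{n^2\mu_{\varepsilon}^2}{c^2} - \mu_{\varepsilon}^2 = \left(\frac{n}{c^2}-1\right)\mu_{\varepsilon}^2$, which is precisely the claimed inequality after expanding $c^2 = (1-\varepsilon+\varepsilon\sqrt n)^2$.

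I do not anticipate a genuine obstacle, since the argument is a short algebraic chain resting on a single squaring of the fairness constraint. The only point requiring care is the normalization convention for the sample variance: the stated inequality matches the population variance (the $\tfrac1n$ normalization), so I would flag that choice explicitly, as the $n-1$ convention would alter the constant on the right-hand side.
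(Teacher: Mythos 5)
Your proof is correct and follows essentially the same route the paper sketches: expanding $\sigma_{\varepsilon}^2$ via the sample first and second moments and invoking the $\varepsilon$-fairness (SOC) constraint satisfied by $\bm z^*(F_{\varepsilon}^c)$. Your explicit remark about the $\tfrac{1}{n}$ versus $\tfrac{1}{n-1}$ variance normalization is a worthwhile clarification that the paper leaves implicit.
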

\begin{proof} 
The proof follows by expanding $\sigma_{\varepsilon}^2$ using Definition \ref{def:eps-fairness} in combination with the sample first and second moments. 
\end{proof}
Proposition \ref{prop:variance} shows that increasing $\varepsilon$ reduces the upper bound on the coefficient of variation ($\sigma_{\varepsilon}^2/\mu_{\varepsilon}^2$), thereby making the solution ``fair-er''. Next, we discuss trends in the feasibility and optimality of Problem \eqref{eq:fair-mls}.
\begin{proposition} \label{prop:eps-max}
Given $\bar\varepsilon \in [0, 1]$, if the problem $F_{\bar \varepsilon}^c$ is infeasible, then $F_{\varepsilon}^c$ is infeasible for any $\varepsilon \in [\bar \varepsilon, 1]$.
\end{proposition}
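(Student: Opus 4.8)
The plan is to prove the statement by a monotonicity argument on the feasible set of $F_\varepsilon^c$, exploiting the fact that $\varepsilon$ enters the formulation only through the single scalar coefficient in the fairness constraint \eqref{eq:soc-e-fair}. Write $n = |\mathcal D|$ and define $c(\varepsilon) \triangleq 1 - \varepsilon + \varepsilon\sqrt{n}$, so that \eqref{eq:soc-e-fair} reads $c(\varepsilon)\,\|\bm d\|_2 \leqslant \|\bm d\|_1$. The first step is simply to record that $c$ is affine in $\varepsilon$ with slope $\sqrt{n}-1 \geqslant 0$ (since $n \geqslant 1$), hence non-decreasing on $[0,1]$; this is the only monotonicity fact the argument needs.

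Next I would introduce the feasible set $\mathcal F(\varepsilon) \triangleq \{\bm d \in \mathcal X_{\mathrm{DC}} : c(\varepsilon)\,\|\bm d\|_2 \leqslant \|\bm d\|_1\}$ and show that it is nested, i.e. $\mathcal F(\varepsilon_2) \subseteq \mathcal F(\varepsilon_1)$ whenever $\varepsilon_1 \leqslant \varepsilon_2$. To see this, fix any $\bm d \in \mathcal F(\varepsilon_2)$. Because $\|\bm d\|_2 \geqslant 0$ and $c(\varepsilon_1) \leqslant c(\varepsilon_2)$ by the previous step, we obtain $c(\varepsilon_1)\,\|\bm d\|_2 \leqslant c(\varepsilon_2)\,\|\bm d\|_2 \leqslant \|\bm d\|_1$, and since the remaining constraints $\mathcal X_{\mathrm{DC}}$ do not depend on $\varepsilon$, it follows that $\bm d \in \mathcal F(\varepsilon_1)$. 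Thus enlarging $\varepsilon$ can only shrink the feasible set.

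The conclusion then follows by set inclusion. The hypothesis that $F_{\bar\varepsilon}^c$ is infeasible is precisely $\mathcal F(\bar\varepsilon) = \varnothing$. For any $\varepsilon \in [\bar\varepsilon, 1]$ the nestedness established above gives $\mathcal F(\varepsilon) \subseteq \mathcal F(\bar\varepsilon) = \varnothing$, so $\mathcal F(\varepsilon) = \varnothing$ and $F_\varepsilon^c$ is infeasible, as claimed.

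There is no genuine analytic obstacle here: the entire content reduces to the sign of the slope $\sqrt{n}-1$ together with the observation that $\mathcal X_{\mathrm{DC}}$ is independent of $\varepsilon$. The one point I would state explicitly is the degenerate case, namely that the zero vector (if it belongs to $\mathcal X_{\mathrm{DC}}$) satisfies \eqref{eq:soc-e-fair} for every $\varepsilon$; consequently infeasibility can only be driven by the interaction of the power-flow constraints with nonzero load-shed vectors. This does not affect the inclusion argument but clarifies that the hypothesis of the proposition is nonvacuous.
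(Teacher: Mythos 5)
Your proof is correct and follows essentially the same route as the paper: both define the $\varepsilon$-parameterized feasible set $\{\bm d \in \mathcal X_{\mathrm{DC}} : (1-\varepsilon+\varepsilon\sqrt{|\mathcal D|})\|\bm d\|_2 \leqslant \|\bm d\|_1\}$ and conclude via the nestedness of these sets as $\varepsilon$ increases. The only difference is that you explicitly justify the nestedness through the monotonicity of the coefficient $c(\varepsilon) = 1-\varepsilon+\varepsilon\sqrt{n}$, a detail the paper states as an observation without proof.
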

\begin{proof}
 To prove this statement, we let 
 \begin{gather*}
 \mathcal Y_{\mathrm{SOC}}^{\varepsilon} \triangleq \left\{ \bm d \in \mathcal X_{\mathrm{DC}} : \left(1-\varepsilon + \varepsilon\sqrt{|\mathcal D|} \right) \cdot \| \bm d \|_2 \leqslant \| \bm d \|_1 \right\}. 
 \end{gather*}
 The proof now follows from the observation that for any $\varepsilon_1, \varepsilon_2 \in [0, 1]$ such that $\varepsilon_1 > \varepsilon_2$, $\mathcal Y_{\mathrm{SOC}}^{\varepsilon_1} \subseteq \mathcal Y_{\mathrm{SOC}}^{\varepsilon_2}$.
\end{proof}
Intuitively, the above proposition states that if no operating point exists for the MLS problem to make the load sheds at least $\bar\varepsilon$-fair, then it also holds true for any $\varepsilon \in [\bar \varepsilon, 1]$. One consequence of the proposition is that there exits a $0\leqslant \varepsilon^{\max}\leqslant 1$ such that for any $\varepsilon \in (\varepsilon^{\max}, 1]$, $F_{\varepsilon}^c$ is infeasible, and $\varepsilon \in (0,\varepsilon^{\max}]$, $F_{\varepsilon}^c$ is feasible.
From here on, we shall refer to $[0, \varepsilon^{\max}]$ as the ``feasibility domain'' for $F_{\varepsilon}^c$. We now state another result that validates that any attempt to obtain a fair load shed allocation is achieved at the expense of increasing cost of operations. 
\begin{proposition} \label{prop:monotonicity}
The univariate function $\|\bm z^*(F_{\varepsilon}^c) \|_1$ is increasing in $\varepsilon$ for all values in the feasibility domain of $F_{\varepsilon}^c$. 
\end{proposition}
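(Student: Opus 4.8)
The plan is to reduce the claim to the elementary principle that the optimal value of a minimization problem is monotone with respect to inclusion of its feasible region, and then to invoke the nesting of feasible sets already established in Proposition~\ref{prop:eps-max}. The key structural fact is that the objective $\|\bm d\|_1$ in \eqref{eq:fair-mls} does \emph{not} depend on $\varepsilon$; only the feasible set $\mathcal Y_{\mathrm{SOC}}^{\varepsilon}$ does. This is precisely the setting in which a set-inclusion argument delivers monotonicity of the optimal value, so no convexity, Lagrangian duality, or sensitivity analysis is required.

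Concretely, I would fix any two parameters $\varepsilon_1, \varepsilon_2$ in the feasibility domain $[0,\varepsilon^{\max}]$ with $\varepsilon_1 > \varepsilon_2$. From the proof of Proposition~\ref{prop:eps-max} we already have the inclusion $\mathcal Y_{\mathrm{SOC}}^{\varepsilon_1} \subseteq \mathcal Y_{\mathrm{SOC}}^{\varepsilon_2}$. Because both $F_{\varepsilon_1}^c$ and $F_{\varepsilon_2}^c$ minimize the same function $\|\bm d\|_1$, a minimizer taken over the smaller set cannot achieve a smaller value than one taken over the larger set; that is,
\begin{gather*}
 \|\bm z^*(F_{\varepsilon_1}^c)\|_1 = \min_{\bm d \in \mathcal Y_{\mathrm{SOC}}^{\varepsilon_1}} \|\bm d\|_1 \geqslant \min_{\bm d \in \mathcal Y_{\mathrm{SOC}}^{\varepsilon_2}} \|\bm d\|_1 = \|\bm z^*(F_{\varepsilon_2}^c)\|_1.
\end{gather*}
Since $\varepsilon_1 > \varepsilon_2$ were arbitrary in the feasibility domain, this establishes that the map $\varepsilon \mapsto \|\bm z^*(F_{\varepsilon}^c)\|_1$ is non-decreasing, which is the content of the proposition.

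The only point I would flag is the precise reading of the word ``increasing.'' The inclusion argument yields monotone \emph{non-decreasing} behavior but not strict monotonicity, and strictness in fact cannot hold on the entire domain: for small $\varepsilon$ the MLS optimizer $\bm v^*$ of \eqref{eq:mls} may itself already be at least $\varepsilon$-fair, so the constraint \eqref{eq:soc-e-fair} is inactive at the optimum and $\|\bm z^*(F_{\varepsilon}^c)\|_1$ remains pinned at $\|\bm v^*\|_1$. I would therefore interpret ``increasing'' as ``non-decreasing,'' which is exactly what the set-inclusion principle supplies. The main obstacle is thus one of interpretation rather than of technique, since the nesting lemma in Proposition~\ref{prop:eps-max} has already carried out the substantive work.
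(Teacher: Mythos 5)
Your proof is correct and follows exactly the paper's own argument: the paper likewise cites the nesting of the feasible sets $\mathcal Y_{\mathrm{SOC}}^{\varepsilon}$ from Proposition~\ref{prop:eps-max} together with the fact that $\|\bm z^*(F_{\varepsilon}^c)\|_1$ is the optimal value of a minimization whose objective does not depend on $\varepsilon$, only stated more tersely. Your added caveat that ``increasing'' can only mean non-decreasing (since the constraint may be inactive for small $\varepsilon$) is a valid and worthwhile clarification of the intended reading.
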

 \begin{proof}
 The proof follows from the following observations: (i) the feasible set of solutions of $F_{\varepsilon}^c$ becomes smaller as $\varepsilon$ increases (see proposition \ref{prop:eps-max}) and (ii) $\|\bm z^*(F_{\varepsilon}^c)\|_1$ is the total minimum load shed obtained by solving the optimization problem $F_{\varepsilon}^c$.
 \end{proof}
 \begin{corollary} \label{eq:corr-pof}
 $\mathrm{POF}(F_{\varepsilon}^c)$ is an increasing function of $\varepsilon$ for all values in the feasibility domain of $F_{\varepsilon}^c$.
 \end{corollary}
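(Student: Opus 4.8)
The plan is to treat this as an almost immediate consequence of Proposition \ref{prop:monotonicity}, exploiting the fact that the price of fairness defined in \eqref{eq:pof} is a positive affine transformation of the total fair load shed $\|\bm z^*(F_{\varepsilon}^c)\|_1$. First I would recall that the reference vector $\bm v^*$ appearing in \eqref{eq:pof} is the optimizer of the plain MLS problem \eqref{eq:mls}, which carries no dependence on $\varepsilon$. Consequently $\|\bm v^*\|_1$ is a fixed constant throughout the feasibility domain, and it is strictly positive whenever POF is meaningfully defined (if $\|\bm v^*\|_1 = 0$ then no load is shed at the optimum and the fairness question is vacuous, so this degenerate case is excluded).

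With this observation in hand, I would rewrite the defining ratio for $F = F_{\varepsilon}^c$ as
\begin{gather*}
 \mathrm{POF}(F_{\varepsilon}^c) = \frac{\|\bm z^*(F_{\varepsilon}^c)\|_1}{\|\bm v^*\|_1} - 1,
\end{gather*}
which displays $\mathrm{POF}(F_{\varepsilon}^c)$ explicitly as the composition of the strictly increasing affine map $t \mapsto t/\|\bm v^*\|_1 - 1$ (strictly increasing because the slope $1/\|\bm v^*\|_1$ is positive) with the univariate function $\varepsilon \mapsto \|\bm z^*(F_{\varepsilon}^c)\|_1$.

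Finally I would invoke Proposition \ref{prop:monotonicity}, which already establishes that $\|\bm z^*(F_{\varepsilon}^c)\|_1$ is increasing in $\varepsilon$ over the feasibility domain of $F_{\varepsilon}^c$. Since the composition of an increasing function with a strictly increasing affine map is again increasing, the claim follows directly. There is no genuine obstacle here: the substantive work is carried entirely by Proposition \ref{prop:monotonicity}, and the only point requiring a moment of care is verifying that the normalizer $\|\bm v^*\|_1$ is a positive, $\varepsilon$-independent constant, so that dividing by it and subtracting $1$ preserves the monotonic ordering.
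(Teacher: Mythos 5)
Your proof is correct and follows the same route the paper intends: the corollary is an immediate consequence of Proposition \ref{prop:monotonicity}, since $\mathrm{POF}(F_{\varepsilon}^c)$ is a positive affine transformation of $\|\bm z^*(F_{\varepsilon}^c)\|_1$ with the $\varepsilon$-independent constant $\|\bm v^*\|_1$ as normalizer. Your added care about $\|\bm v^*\|_1 > 0$ is a reasonable (and harmless) refinement; the paper implicitly assumes this by restricting attention to scenarios with non-zero total load shed.
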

 The above corollary states that the relative increase in cost of operations measured in terms of the total load shed in the system increases as we strive to make the individual load sheds to all the customers more fair. 

\section{Case Studies} \label{sec:results}
In this section, we focus on extensive case studies that study the two methods of incorporating fairness in the MLS problem formulation in Sections \ref{sec:q1} and \ref{sec:q2}. The case studies are designed to compare and contrast the two methods in terms of (i) the distribution of load sheds that each method results in, (ii) the fairness of the load sheds across all customers measured using the indices presented in Sec. \ref{subsec:fairness-indices}, and finally, (iii) examining the cost-fairness trade-off of all the formulations using the price of fairness introduced in Sec. \ref{subsec:pof}. 

\subsection{Network data and computational platform} \label{subsec:data}
All the case studies are performed on the Power Grid Library's \cite{pglib} IEEE 14-bus test system with 10 loads. For the 14-bus network random damage scenarios were generated, with each scenario containing 5 damaged lines that caused non-zero total load shed for the MLS problem in \eqref{eq:mls}. This resulted in a total of 9765 scenarios on which the MLS problem and its fair variants were tested. 

The formulations presented in this paper were implemented in the Julia programming language \cite{bezanson2017julia} using the JuMP \cite{dunning2017jump} package. All linear and second-order cone optimization problems were solved using CPLEX \cite{cplex}, a commercial mixed-integer linear and convex quadratic programming solver where as all the nonlinear convex optimization problems were solved using IPOPT \cite{wachter2006implementation}. The source-code for the implementation, the case-studies, and the plots are all open-sourced and made available at \url{https://github.com/kaarthiksundar/FairlyExtreme}. As for the computational platform, all experiments were run on a Intel Haswell 2.6 GHz, 62 GB, 20-core machine at Los Alamos National Laboratory.

\subsection{Comparison with weighted version of MLS} \label{subsec:weighted_vs_unweighted}
As mentioned in the introduction, one way that existing literature enforces fairness and equity is by the use of weights in the objective function. Here we show that this approach is not effective in enforcing fairness, rather it is only good at enforcing priorities. To this end, Fig. \ref{fig:weighted-vs-unweighted-mls} shows the box plot of the individual load sheds for the 9765 scenarios. For the weighted version of the MLS, a weight of 2 units was used for the loads with ids 1 and 2, respectively and a weight of 1 unit for the rest. The weights were chosen this way to reduce the number the outliers in the load shed for these two loads for the MLS problem in Fig. \ref{fig:weighted-vs-unweighted-mls}. It is clear from the figure that these weights helped reduce the outliers in loads 1 and 2, but it did so at the cost of loads 3 and 6, i.e., the load shed at the loads with higher weight was moved to loads with lower weights. 

\begin{figure}[htbp]
 \centering
 \includegraphics[scale=0.7]{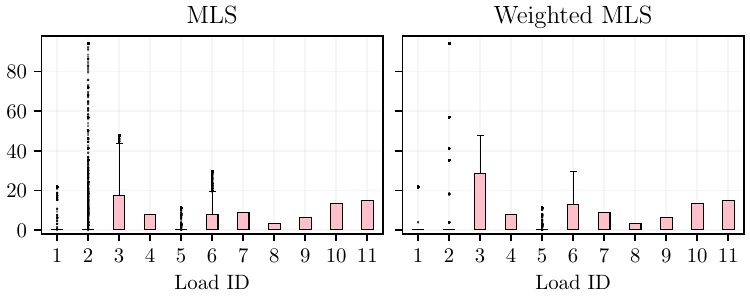}
 \caption{Box plots of the load sheds for the MLS problem and its weighted version. Here the unweighted version of the MLS is entitled ``MLS''. Observe that using weights moves the load shed from the loads with the highest priority (IDs 7 and 10) to the ones with low priority (IDs 4 and 8).}
 \label{fig:weighted-vs-unweighted-mls}
\end{figure}
Furthermore, Table \ref{tab:weighted_vs_unweighted} shows the statistics of the fairness indices obtained for the two versions of the MLS. It is clear that the distribution of load sheds across customers has not improved from a fairness stand-point when moving from an unweighted to a weighted MLS problem. We address this issue in subsequently in Section \ref{subsec:weights-fairness} and show that fairness and load prioritization can indeed be jointly optimized, using our $\varepsilon$-fairness approach, introduced in Section \ref{sec:q2}.
\begin{table}[htbp]
 \centering
 \caption{Statistics of fairness indices for the MLS and its weighted version. A fair solution will have a greater value of Gini index and a lower value of Jain et. al index.}
 \footnotesize
 \begin{tabular}{ccccc}
 \toprule
 \multirow{2}{*}{Statistic} & \multicolumn{2}{c}{Gini Index} & \multicolumn{2}{c}{Jain et. al Index} \\ 
 \cmidrule{2-5} 
 & unweighted & weighted & unweighted & weighted \\
 \midrule 
 mean & 0.86 & 0.85 & 0.23 & 0.24 \\ 
 max. & 1.00 & 1.00 & 0.65 & 0.65 \\ 
 min. & 0.43 & 0.43 & 0.09 & 0.09 \\
 \bottomrule
 \end{tabular}
 \label{tab:weighted_vs_unweighted}
\end{table}

\begin{figure}[htbp]
 \centering
 \includegraphics[scale=0.7]{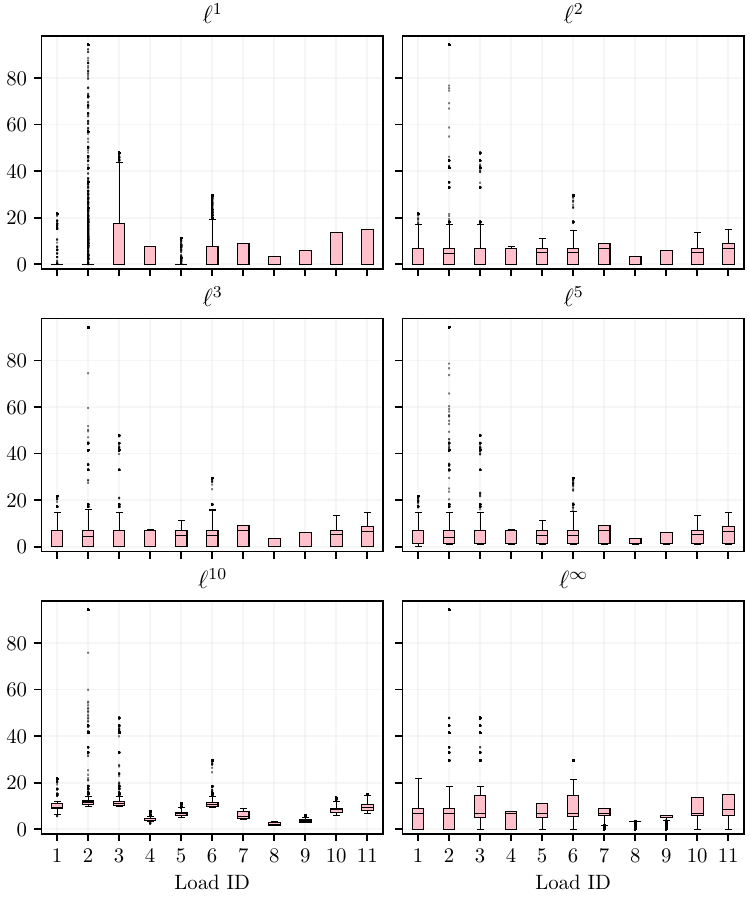}
 \caption{Box plot of load sheds for every load over all the scenarios for $F^o_p$ with varying values of $p$. Here, the $y$-axis is the load shed in MW.}
 \label{fig:ls-q1}
\end{figure}

\subsection{Fairness using $\ell^p$ norms} \label{subsec:obj-fairness}
Here, we examine the impact of using $\ell^p$ norms in the objective function of the MLS on the fairness of the solutions produced by the same. To this end, Fig. \ref{fig:ls-q1} shows the box plot of the load sheds for $F_p^o$ for $p \in \{1, 2, 3, 5, 10, \infty\}$. Observe that as we move from $p = 1 \rightarrow \infty$, the distribution of load sheds across all scenarios tries to converge to the same distribution for every load. Nevertheless, the other constraints in $\mathcal X_{\mathrm{DC}}$ does not permit them to be equal to one another. 

To obtain a more quantitative view of how fair the different solutions are, we present Fig. \ref{fig:indexes-q1} which shows the box plot of the two fairness indices for the solutions of $F_p^o$. We remark that neither fairness index is a monotonic function of the $p$-norm used in the objective function, i.e, as we increase the value of $p$, it is not necessary for the loads sheds to become fairer. Fig. \ref{fig:pof-q1} shows a box plot of the POF values for different values of $p$. Out of the total of 9765 scenarios, monotonicity of POF with $p$ does not hold for 5166 scenarios. This makes the problem of selecting $p$ to obtain the most fair load shed (measured in terms of fairness indices) difficult to solve. Nevertheless, one can always choose a few values of $p$ heuristically, obtain the optimal solutions for $\ell^p$ norms, and analyse them both in terms of fairness indices and POF values, to determine an appropriate $p$.
\begin{figure}[htbp]
 \centering
 \includegraphics[scale=0.7]{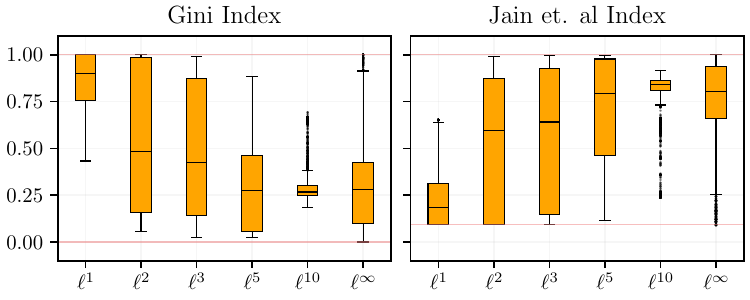}
 \caption{Box plot of the fairness indices for the optimal solutions of $F_p^o$. The minimum and maximum values the indices can take is shown with a red line.}
 \label{fig:indexes-q1}
\end{figure}
\begin{figure}[htbp]
 \centering
 \includegraphics[scale=0.7]{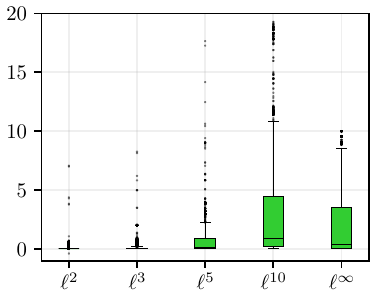}
 \caption{Box plot of the POF values for the optimal solutions of $F_p^o$.}
 \label{fig:pof-q1}
\end{figure}
The maximum of POF values for each $p$ is shown in Table \ref{tab:pof-max-q1}. This indicates that there are scenarios for which the operator has to pay a prohibitive cost to enforce fairness, bringing to question this approach's practicality for arbitrary $\ell^p$ norms. But, if an operator is restricted to using this approach, Fig. \ref{fig:indexes-q1} and \ref{fig:indexes-q2}, and results in Table \ref{tab:pof-max-q1} indicate that selecting $p$ from the set $\{2, 3, \infty\}$ provides a good balance of cost increase and fairness.
\begin{table}[htbp]
 \centering
 \caption{Maximum POF values for different $p$-norms.}
 \footnotesize
 \begin{tabular}{cccccc}
 \toprule 
 $p$ & 2 & 3 & 5 & 10 & $\infty$ \\ 
 \midrule
 POF & 4.38 & 101.84 & 1420.98 & 7280.17 & 10.0 \\
 \bottomrule
 \end{tabular}
 \label{tab:pof-max-q1}
\end{table}

\subsection{Fairness using a SOC constraint} \label{subsec:cons-fairness}
We now present results to show the effectiveness of the SOC constraint for $\varepsilon$-fairness in obtaining fair solutions to the MLS. The SOC constraint is parameterized by $\varepsilon$ that can take values in $[0, 1]$. To show the impact of $\varepsilon$, we vary it from 0 to 1 in steps of 0.1. We start by presenting the box plot of load sheds for all scenarios in Fig. \ref{fig:ls-q2} to provide a qualitative view of fairness. 

\begin{figure}[htbp]
 \centering
 \includegraphics[scale=0.7]{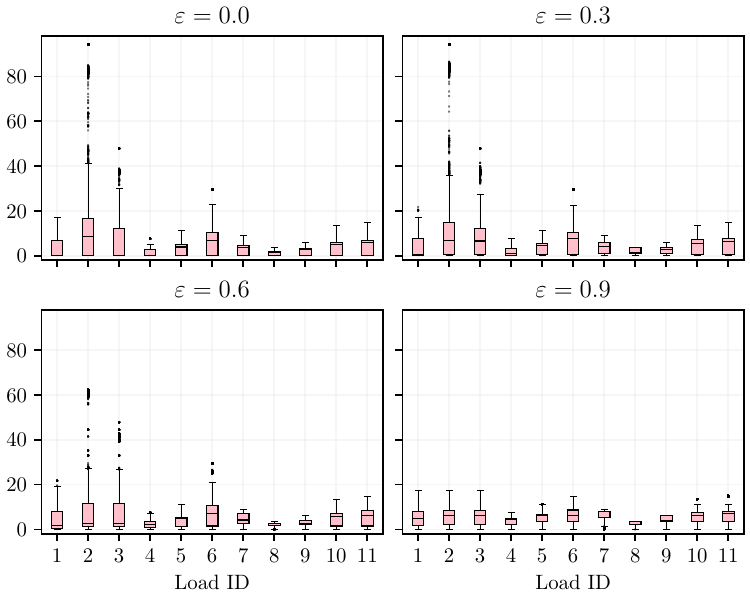}
 \caption{Box plot of load sheds for $F^c_{\varepsilon}$. In this case, it is known that as we increase $\varepsilon$, the solution of $F^c_{\varepsilon}$, if it exists, become more fair.}
 \label{fig:ls-q2}
\end{figure}

The fairness indices for varying values of $\varepsilon$ is shown in Fig. \ref{fig:indexes-q2}. Unlike the fairness indices in Fig. \ref{fig:indexes-q1}, here the value of Gini (Jain et. al) index monotonically decreases (increases) with increasing $\varepsilon$. This is true because the parameter $\varepsilon$ is a direct measure of fairness in the SOC constraint in \eqref{eq:soc-e-fair}. We note that for both the indices, the box plot is shown only for the scenarios that are feasible for all $\varepsilon \in [0, 0.9]$. When the value of $\varepsilon$ was set to 1, only 634 instances were feasible i.e., for these instances $F_{\varepsilon}^c$ is feasible for all values of $\varepsilon \in [0, 1]$.
\begin{figure}[htbp]
 \centering
 \includegraphics[scale=0.7]{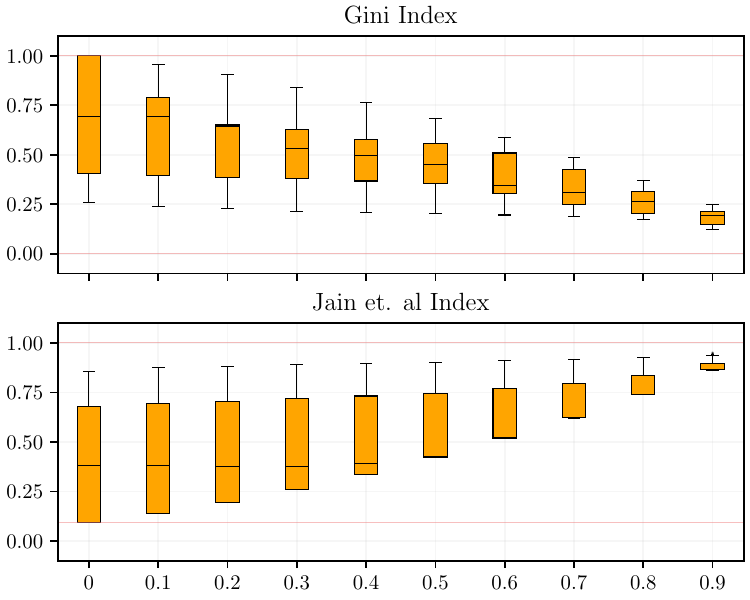}
 \caption{Fairness indices for $F_{\varepsilon}^c$ with $\varepsilon$ varying from 0 to 0.9 in steps of 0.1. The scenarios that are feasible for all values of $\varepsilon$ in the plot are used for the box plot which equals 6610. The minimum and maximum values the indices can take is shown with a red line.}
 \label{fig:indexes-q2}
\end{figure}

Table \ref{tab:infeasible-q2} shows the number of scenarios for which $F^c_{\varepsilon}$ is infeasible for different values of $\varepsilon$. The number of infeasible scenarios increases with $\varepsilon$, validating the correctness of proposition \ref{prop:eps-max} empirically. Note that if for a given $\varepsilon \in [0, 1]$, no $\varepsilon$-fair solution exists, the value of $\varepsilon$ has to be reduced to ensure $\varepsilon < \varepsilon^{\max}$ and the value of $\varepsilon^{\max}$ can be computed systematically using bisection on $[0, 1]$.

\begin{table}[htbp]
 \centering
 \caption{Number of scenarios $F_{\varepsilon}^c$ for different values of $\varepsilon$.}
 \footnotesize
 \begin{tabular}{cc}
 \toprule
 $\varepsilon$ & \# infeasible scenarios \\
 \midrule 
 $[0.0, 0.5]$ & 0/9765 \\ 
 $[0.6, 0.7]$ & 813/9765 \\ 
 $0.8$ & 1443/9765 \\ 
 $0.9$ & 1715/9765 \\ 
 \bottomrule
 \end{tabular}
 \label{tab:infeasible-q2}
\end{table}
\begin{figure}[htbp]
 \centering
 \includegraphics[scale=0.7]{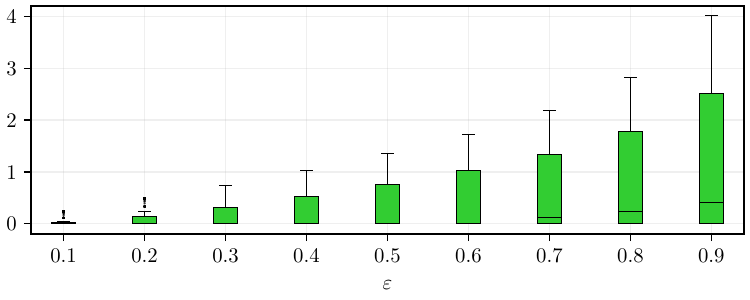}
 \caption{Box plot of the POF values for the optimal solutions of $F_{\varepsilon}^c$ for different values of $\varepsilon$.}
 \label{fig:pof-q2}
\end{figure}
\begin{table}[htbp]
 \centering
 \caption{Maximum POF values for different $\varepsilon$ values}
 \footnotesize
 \begin{tabular}{cccccc}
 \toprule 
 $\varepsilon$ & 0.1 & 0.3 & 0.5 & 0.7 & 0.9 \\ 
 \midrule
 POF & 0.24 & 0.74 & 1.42 & 2.34 & 4.02\\
 \bottomrule
 \end{tabular}
 \label{tab:pof-max-q2}
\end{table}

We conclude this study by showing the monotonicity of the POF with $\varepsilon$ through the box plot in \ref{fig:pof-q2}. Again, similar to Fig. \ref{fig:indexes-q2} only the 6610 scenarios that were feasible for all values of $\varepsilon \in [0, 0.9]$ are used for the box plot. The maximum of the POF values for different $\varepsilon$ values is shown in table \ref{tab:pof-max-q2}. These maximum values are one to three orders of magnitude lower than the those in table \ref{tab:pof-max-q1} because unlike the approach to obtain fairness by minimizing $\ell^p$ norm, this approach aims to find the minimum cost solution such that the solution is at least $\varepsilon$-fair.

\subsection{Incorporating priorities and fairness simultaneously} \label{subsec:weights-fairness}
The approach of adding a fairness constraint into MLS does not forbid the use of weights to model priorities and forthcoming results analyse the impact of using weights to model priorities for different loads and enforcing fairness as a constraint in the weighted MLS problem in \eqref{eq:weighted-mls}. To this end, we use the same weights in Sec. \ref{subsec:weighted_vs_unweighted} for the weighted MLS problem and allow $\varepsilon$ in the SOC constraint for fairness to take any value in the set $\{0.2, 0.4, 0.6, 0.8\}$. The first set of results in Fig. \ref{fig:indexes-weighted} show the box plot of the Gini and Jain et. al indices for the load sheds in the different scenarios when priorities and fairness are enforced simultaneously. It is clear from the plots that adding fairness constraint to the weighted MLS problem clearly makes the load sheds fair. 
\begin{figure}[htbp]
 \centering
 \includegraphics[scale=0.7]{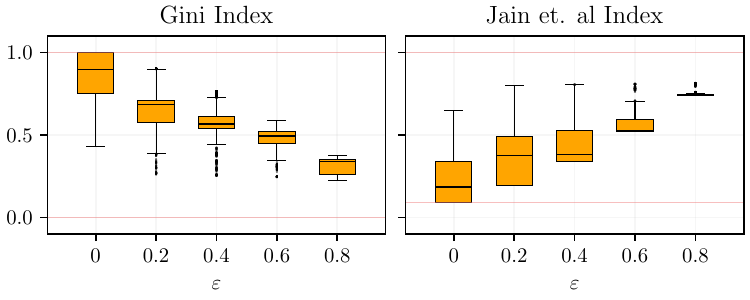}
 \caption{Fairness indices for the solutions when weights are used to prioritize loads and fairness is enforced as a constraint. }
 \label{fig:indexes-weighted}
\end{figure}

To see that priorities are also being taken into account when enforcing $\varepsilon$-fairness, we present Table \ref{tab:priority-metrics} that compares the average sum of load shed in the high priority loads (loads 1 and 2) relative to the total load shed, between the unweighted and weighted version of the MLS problem. It is clear from the table that as we move from the unweighted to the weighted setting, the total load shed contributed by the high priority loads decreases (in our case by a factor of $3\sim 4$ for $\varepsilon > 0$). This shows that the novel technique to enforce fairness as a constraint can work in tandem with prioritized load shed as well. 

\begin{table}[htbp]
 \centering
 \caption{Mean of the sum of load sheds in the high priority loads relative to the total load shed in the system (in \%) for unweighted and weighted MLS problems while enforcing $\varepsilon$-fairness.}
 \footnotesize
 \begin{tabular}{ccc}
 \toprule
 $\varepsilon$ & MLS & Weighted MLS \\
 \midrule 
 0.0 & 15.77 & 9.79\\ 
 0.2 & 22.60 & 7.41 \\ 
 0.4 & 21.32 & 6.00 \\ 
 0.6 & 17.91 & 2.45 \\ 
 0.8 & 16.96 & 4.71 \\ 
 \bottomrule
 \end{tabular}
 \label{tab:priority-metrics}
\end{table}

\section{Conclusion \& Way Forward} \label{sec:conclusion}
This paper presents two general ways to incorporate fairness into existing optimization problems in the power systems literature that focus on minimizing cost of operations or planning. One way is by changing the objective function which has been used in the literature before in the context of other applications. The other technique is through the introduction of a novel notion of fairness referred to as $\varepsilon$-fairness that can directly be incorporated into the optimization problem through a single second-order cone constraint. Through extensive case studies on a minimum load shedding problem that is used to minimize outages on a damaged power system, it is shown that incorporating fairness as a constraint is flexible to study the cost-fairness trade-off that inherently exists when modeling fairness and conforms to our intuitive understanding of this trade-off through mathematical properties like monotonicty. Future work would focus on showing the effectiveness of these approaches in obtaining equitable expansion plans for the power grid.

\bibliography{fairness-refs}

\bibliographystyle{IEEEtran}
\end{document}